\theoremstyle{plain}
\newtheorem{theorem}{Theorem}
\newtheorem{combinatorial proof}{Theorem}[combinatorial proof]
\newtheorem{rem}[theorem]{Remark}
\theoremstyle{definition}
\newtheorem{defn}[theorem]{Definition}
\newtheorem{exmp}[theorem]{Example}
\title{}
\begin{document}
\title [A MATRIX FOR COUNTING PATHS IN ACYCLIC COLORED DIGRAPHS ] {A MATRIX FOR COUNTING PATHS IN ACYCLIC COLORED DIGRAPHS}
\author[Sudip Bera]{Sudip Bera}
\address[Sudip Bera]{Faculty of Mathematics, DA-IICT, Gandhinagar-382007, India.}
\email{sudip\_bera@daiict.ac.in}
\keywords{Determinant; Colored digraph; Enumeration of paths}
\subjclass[2010]{05A19; 05A05; 05C30; 05C38}
\maketitle
\begin{abstract}
In this paper, we generalize a theorem of R. P. Stanley regarding the enumeration of paths in acyclic digraphs. 
\end{abstract}
\section{Introduction}
The evaluation of determinants is a nice topic, and fascinating for many people \cite{27,26,25,39,36}. Moreover, the evaluation of determinants in a combinatorial way gives more insight into `why' rather than `how'. This is especially true when the entries of a matrix have natural graph theoretic descriptions. Recently, in \cite{18} the author shows a combinatorial interpretation of the determinant of
a matrix as a generating function over Brauer diagrams in two different but related ways. This interpretation naturally explains why the determinant of an even anti-symmetric matrix is the square of a Pfaffian. In \cite{hook-sudip}, the author gives a combinatorial interpretation of product formulas of various types of hook determinants by counting weighted paths in a digraph. Moreover in \cite{22}, Doron Zeilberger gives combinatorial proofs of some determinantal identities. In \cite{10}, R. P. Stanley defines a matrix $A$ associated with an acyclic digraph $\Gamma$ (without multiple edges) such that the determinant of $A$ enumerates the paths in $\Gamma.$ In fact, the theorem of R. P. Stanley is as follows:

Let $[n]=\{1, 2, \cdots, n\}$ be the vertex set of $\Gamma.$ Since $\Gamma$ is acyclic, we may assume  without loss of generality that $i<j$ whenever $(i, j)$ is an edge. Let $\{ x_1, x_2, \cdots, x_n\}$ be a set of indeterminates. Construct an $n\times n$ matrix $A=(a_{ij})$ as follows: 
\begin{equation}\label{def:matrix stanely}
a_{ij}=
\begin{cases}
1+x_i, & \text{ if } i=j\\
x_i, & \text{ if } i<j \text{ and } (i, j) \text{ is not an edge }\\
0, & \text{ if } i<j \text{ and } (i, j) \text{ is an edge }  \\
x_i, & \text{ if } i>j.
\end{cases}
\end{equation}
Then the theorem of Stanley says the following:
\begin{theorem}[\cite{10}]\label{thm: Stanley enumeration of path in adigraph}
${\rm det(A)}=1+\sum\limits_Px_{k_1}x_{k_2}\cdots x_{k_j}$, where $P$ ranges over all paths $k_1k_2\cdots k_j.$
\end{theorem}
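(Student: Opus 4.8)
The plan is to separate the diagonal $1$'s from the indeterminates by writing $A = I + M$, where $M = (m_{ij})$ has $m_{ij} = x_i$ whenever $(i,j)$ is not a \emph{forbidden} pair (that is, unless $i<j$ and $(i,j)$ is an edge) and $m_{ij}=0$ at every forbidden pair. I would then invoke the standard principal-minor expansion
\begin{equation*}
\det(A) = \det(I + M) = \sum_{S \subseteq [n]} \det\big(M[S]\big),
\end{equation*}
where $M[S]$ denotes the principal submatrix of $M$ on the rows and columns indexed by $S$ and the empty set contributes $1$. This reduces the theorem to understanding a single principal minor.

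Fix $S = \{s_1 < s_2 < \cdots < s_m\}$. Since every entry of the $p$-th row of $M[S]$ is either $x_{s_p}$ or $0$, I would factor $x_{s_p}$ out of row $p$ to obtain $\det(M[S]) = \big(\prod_{k \in S} x_k\big)\,\det(C_S)$, where $C_S$ is the $0/1$ matrix with a $0$ exactly at the forbidden positions (those $(p,q)$ with $p<q$ and $(s_p,s_q)$ an edge) and a $1$ everywhere else. The crux of the argument is then the following claim: $\det(C_S) = 1$ if the consecutive pairs $(s_1,s_2),(s_2,s_3),\dots,(s_{m-1},s_m)$ are all edges of $\Gamma$, and $\det(C_S)=0$ otherwise.

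To prove the claim — which I expect to be the main obstacle — I would perform the row operations $R_p \mapsto R_p - R_m$ for $p = 1, \dots, m-1$, which leave the determinant unchanged because the last row of $C_S$ is all $1$'s. After this step each row $p<m$ has entry $-1$ at the forbidden columns $q>p$ and $0$ elsewhere, so every nonzero entry of row $p$ sits in a column strictly larger than $p$. Hence a permutation $\sigma$ can contribute to the expansion of the determinant only if $\sigma(p) > p$ for all $p<m$; a short induction (starting from $\sigma(m-1)=m$) shows that the unique such permutation is the $m$-cycle $p \mapsto p+1$ (and $m \mapsto 1$). Its signed contribution is $(-1)^{m-1}\prod_{p=1}^{m-1}(-[(s_p,s_{p+1})\text{ is an edge}])$, which equals $1$ when all consecutive pairs are edges and $0$ as soon as one of them is missing. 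This establishes the claim.

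Finally I would assemble the pieces. A subset $S$ satisfies the condition of the claim precisely when its increasing listing $s_1 s_2 \cdots s_m$ is a directed path of $\Gamma$; conversely, since all edges increase the vertex label, every directed path $k_1 k_2 \cdots k_j$ is the increasing listing of its own vertex set, so path-forming subsets correspond bijectively to paths $P$, each contributing $\prod_{k\in S} x_k = x_{k_1} x_{k_2} \cdots x_{k_j}$. Combining this with the minor expansion and the empty-set term gives $\det(A) = 1 + \sum_P x_{k_1} x_{k_2} \cdots x_{k_j}$, as desired.
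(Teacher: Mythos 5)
Your proof is correct, but it follows a genuinely different route from the one in the paper. The paper treats Theorem \ref{thm: Stanley enumeration of path in adigraph} as the $k=1$ case of its main theorem, whose proof is word-combinatorial: it models $\det(A)$ via linear subdigraphs of the associated weighted digraph, builds the path-generating function as a sum of ``best'' words by inclusion--exclusion over bad consecutive pairs, matches those terms against certain simple linear subdigraphs, and then kills everything else with a sign-reversing, weight-preserving involution on the set of complex linear subdigraphs (joining or splitting a singular cycle at its smallest point of singularity). You instead stay entirely inside linear algebra: the expansion $\det(I+M)=\sum_{S}\det(M[S])$ reduces everything to one principal minor, the row factorization isolates the monomial $\prod_{k\in S}x_k$, and the row reduction $R_p\mapsto R_p-R_m$ forces a unique contributing permutation (the $m$-cycle $p\mapsto p+1$), whose signed product of entries is the indicator that $s_1s_2\cdots s_m$ is a path. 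I checked the key claim: after the reduction the rows $p<m$ are supported strictly above the diagonal, so indeed only the long cycle survives, and its contribution $(-1)^{m-1}\prod_{p=1}^{m-1}\bigl(-[(s_p,s_{p+1})\text{ is an edge}]\bigr)$ is exactly $0$ or $1$ as you state; the identification of path-forming subsets with paths is valid because acyclicity forces every path to be vertex-increasing. Your argument is shorter and avoids the delicate case analysis of the involution (enclosed points versus corners of IDI paths), at the cost of being tied to the fact that each row of $M$ is constant off its zero positions; the paper's involution machinery is built to survive the passage to the $k$-colored matrix $A_{\Gamma_{k,R}}$, where the off-diagonal entries of a row are distinct partial sums and a single common factor can no longer be pulled out of each row (though a variant of your row operation, subtracting $\bigl(\sum_t x_{s_p}^{(t)}\bigr)$ times the normalized last row, would still go through there).
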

The above theorem is an easy consequence of a more general theorem of Goulden and Jackson \cite{Goulden-Jackson-79}. Motivated by Theorem \ref{thm: Stanley enumeration of path in adigraph}, in this article we define a matrix $A_{\Gamma_{k, R}}$ associated with an acyclic digraph $\Gamma_{k, R}$ (having multiple edges) such that $\text{det}(A_{\Gamma_{k, R}})$ enumerates the paths in the  digraph $\Gamma_{k, R}.$  
\section{Basic definitions and main theorem}
In this section, we state our main theorem. Before that, let us briefly describe a well-known graphical interpretation of determinant. See \cite{16} for details. To an $n\times n$ matrix $A,$ we can associate a weighted digraph $D(A),$ with vertex set $[n]$ and for each ordered pair $(i, j),$ there is an edge directed from $i$ to $j$ with weight $a_{ij}$. A \emph{linear subdigraph} $\gamma$ of $D(A)$ is a spanning collection of pairwise vertex-disjoint cycles. A loop around a single vertex is also considered to be a cycle of length $1.$ The \emph{weight} of a linear subdigraph $\gamma,$ written as $w(\gamma)$ is the product of the weights of all its edges. The number of cycles contained in $\gamma$ is denoted by $c(\gamma).$ The \emph{sign} of a linear subdigraph $\gamma$ is $(-1)^{n+c(\gamma)},$ where $n$ is the order of the matrix $A.$ Now the cycle-decomposition of permutations yields the following description of ${\rm det}(A),$ namely \[{\rm det}(A)=\sum\limits_{\gamma}(-1)^{n+c(\gamma)}w(\gamma),\] where the summation runs over all linear subdigraphs $\gamma$ of $D(A).$

The \emph{standard representation} of a directed cycle is defined to be a sequence of vertices $v_0\rightarrow v_1 \rightarrow \cdots \rightarrow v_k \rightarrow v_0$ such that $v_0$ is the smallest among the elements $\{ v_0, v_1, \cdots, v_k\}.$ Although any vertex on a directed cycle can be treated as the initial and terminal point of the cycle, in the standard representation only the smallest vertex is treated to be the initial and the terminal point of the cycle. A path $v_i\rightarrow v_{k_1}\rightarrow v_{k_2} \rightarrow \cdots \rightarrow v_{k_l} \rightarrow v_j$ in a graph $\Gamma$ is said to be \emph{vertex-increasing} if $v_i<v_{k_1}< \cdots < v_{k_{\ell}} <v_j.$ Vertex-increasing path between the vertices $v_i, v_j$ is denoted by $v_i\text{I}v_j.$ Similarly we may define a \emph{vertex-decreasing} path and denote the vertex-decreasing path between the vertices $v_i, v_j$ by $v_i\text{D}v_j.$ A path is said to be \emph{IDI} path if the standard representation of this path contains a subpath of the form $v_i\text{I}v_j\text{D}v_k\text{I}v_{\ell},$ for some $v_i, v_j, v_k, v_{\ell}.$ A vertex $v$ of a cycle $C_i$ in a linear subdigraph is said to be \emph{enclosed} if it is surrounded by another cycle $C_j (\neq C_i)$ of that linear subdigraph.

For example, in Figure \ref{fig:singularity on stanly's thm}, there are four linear subdigraphs with $4$ vertices. In the first linear subdigraph vertices $2$ and $3$ are surrounded by the cycle $1\rightarrow 4\rightarrow 1.$ So, $2, 3$ are enclosed vertices. Similarly, in the second linear subdigraph $2$ and $3$ are enclosed vertices. Infact, here $2$ and $3$ are surrounded by the cyles $1\rightarrow 3\rightarrow 1$ and $2\rightarrow 4\rightarrow 2$ respecively.  
\begin{defn}\label{defn:complex lsd}
A linear subdigraph is said to be \emph{complex} if and only if
\begin{enumerate}
\item 
either at least one of its cycle encloses a vertex of another nontrivial (i.e., not a loop) cycle
\item
or it has at least one cycle $C,$ whose standard representation contains a subpath of the form $v_i\text{I}v_j\text{D}v_k\text{I}v_{\ell},$ (i.e., $C$ has a IDI path) for some $v_i, v_j, v_k, v_{\ell}.$	
\end{enumerate}
\end{defn}
% \begin{defn}\label{defn: singular and point of singularity of lsd}
A cycle $C$ of a complex linear subdigraph is said to be \emph{singular} if and only if either $C$ encloses a vertex of another nontrivial cycle of the same linear  subdigraph, or it has a IDI path (of the form $v_i\text{I}v_j\text{D}v_k\text{I}v_{\ell},$ for some $v_i, v_j, v_k, v_{\ell}).$	
%\end{enumerate}
%\end{defn}
\begin{defn}\label{defn: point of singularity}
A vertex $v_k$ is said to be a point of \emph{singularity} of a singular cycle $C$ if $v_k$ is a vertex of another nontrivial cycle of the same linear subdigraph enclosed by $C$ or there exists a IDI path $v_i\text{I}v_j\text{D}v_k\text{I}v_{\ell}$ along $C$ (for the latter case, sometime we call $v_k$ a \emph{corner} point of the IDI path).
\end{defn}
\begin{figure}[H]
	\tiny
	\tikzstyle{ver}=[]
	\tikzstyle{vert}=[circle, draw, fill=black!100, inner sep=0pt, minimum width=4pt]
	\tikzstyle{vertex}=[circle, draw, fill=black!.5, inner sep=0pt, minimum width=4pt]
	\tikzstyle{edge} = [draw,thick,-]
	%\tikzstyle{edge_style} = [draw=black, line width=2, ultra thick]
	\tikzstyle{node_style} = [circle,draw=blue,fill=blue!20!,font=\sffamily\Large\bfseries]
	\centering
	%\tikzset{->,>=stealth', auto,node distance=1cm,
	%	thick,main node/.style={circle,draw,font=\sffamily\Large\bfseries}}
	\newcommand{\updownarrows}{\mathbin\uparrow\hspace{-.5em}\downarrow}
	\newcommand{\downuparrows}{\mathbin\downarrow\hspace{-.5em}\uparrow}
	
	\tikzset{->-/.style={decoration={
				markings,
				mark=at position #1 with {\arrow{>}}},postaction={decorate}}}
	\begin{tikzpicture}[scale=1]
	\tikzstyle{edge_style} = [draw=black, line width=2mm, ]
	\tikzstyle{node_style} = [draw=blue,fill=blue!00!,font=\sffamily\Large\bfseries]
	%EdgeStyle/.append style = {->, bend left} }
	\tikzset{
		LabelStyle/.style = { rectangle, rounded corners, draw,
			minimum width = 2em, fill = yellow!50,
			text = red, font = \bfseries },
		VertexStyle/.append style = { inner sep=5pt,
			font = \Large\bfseries},
		EdgeStyle/.append style = {->, bend left} }
	\tikzset{vertex/.style = {shape=circle,draw,minimum size=1.5em}}
	\tikzset{edge/.style = {->,> = latex'}}
%%%%%%%%%%%%%%%%%%%%%%%%%%%%%%%%%%%%%%%%%%%%%%%%%%%%%%%%%%%%%%%%%%%%
	\node[] (a) at  (0,0) {$\bf{1}$};
	\node[] (b) at  (1.5, 0) {$\bf{2}$};
	\node[] (c) at  (3,0) {$\bf{3}$};
	\node[] (d) at  (4.5,0) {$\bf{4}$};
	%%%%%% fiiliing of node in first lsd %%%%%%
    \fill[blue!100!](.2,0) circle (.08);
    \fill[blue!100!](4.3,0) circle (.08);
    \draw[edge, blue] (.2,0)  to[bend left] (4.3, 0);
	\draw[edge, blue] (4.3, 0)  to[bend left] (.2, 0);
	%%%%%%%%%%%% red lsd %%%%%%%%%%%%%%
	\draw[edge, red] (1.7,0)  to[bend left] (2.8,0);
	\draw[edge, red] (2.8,0)  to[bend left] (1.7,0);
     \fill[red!100!](1.7,0) circle (.08);
	\fill[red!100!](2.8,0) circle (.08);
	%%%%%%%%%%%%%%%%%%%%%%%%%%%%%%%%%%%%%% 2nd  lsd %%%%%%%%%%%%%%%%%%%%%%%%%%%%%%%%
	\node[] (a) at  (7,0) {$\bf{1}$};
	\node[] (b) at  (8.5, 0) {$\bf{2}$};
	\node[] (c) at  (10,0) {$\bf{3}$};
	\node[] (d) at  (11.5,0) {$\bf{4}$};
	%%%%%% fiiliing of node in first lsd%%%%%%%%
	\fill[blue!100!](7.2,0) circle (.08);
	\fill[red!100!](11.3,0) circle (.08);
	\draw[edge, blue] (7.2, 0)  to[bend left] (9.8, 0);
	\draw[edge, blue] (9.8, 0)  to[bend left] (7.2, 0);
	%%%%%%%%%%%% red lsd %%%%%%%%%%%%%%%%%%%%
	\draw[edge, red] (8.7,0)  to[bend left] (11.3,0);
	\draw[edge, red] (11.3,0)  to[bend left] (8.7, 0);
	\fill[red!100!](8.7,0) circle (.08);
	\fill[blue!100!](9.8,0) circle (.08);
	%%%%%%%%%%%%%%%%%%%%%%%%%%%%%%%%%%%%%% 3rd  lsd %%%%%%%%%%%%%%%%%%%%%%%%%%%%%%%%%%%
	\node[] (a) at  (0,-3) {$\bf{1}$};
	\node[] (b) at  (1.5, -3) {$\bf{2}$};
	\node[] (c) at  (3,-3) {$\bf{3}$};
	\node[] (d) at  (4.5,-3) {$\bf{4}$};
	%%%%%% fiiliing of node in first lsd %%%%%%
	\fill[blue!100!](.2,-3) circle (.08);
	\fill[blue!100!](4.3,-3) circle (.08);
	\draw[edge, blue] (.2, -3)  to[bend left] (4.3, -3);
	\draw[edge, blue] (4.3, -3)  to[bend left] (1.7, -3);
	\draw[edge, blue] (1.7, -3)  to[bend left] (2.8, -3);
	\draw[edge, blue] (2.8,-3)  to[bend left] (.2, -3);
	%%%%%%%%%%%% red lsd %%%%%%%%%%%%%%
	%\draw[edge, red] (b)  to[bend left] (c);
	%\draw[edge, red] (c)  to[bend left] (b);
	\fill[blue!100!](1.7,-3) circle (.08);
	\fill[blue!100!](2.8,-3) circle (.08);
	%%%%%%%%%%%%%%%%%%%%%%%%%%%%%%%%%%%%%% 4th  lsd %%%%%%%%%%%%%%%%%%%%%%%%%%%%%%%%
	\node[] (a) at  (7,-3) {$\bf{1}$};
	\node[] (b) at  (8.5, -3) {$\bf{2}$};
	\node[] (c) at  (10,-3) {$\bf{3}$};
	\node[] (d) at  (11.5,-3) {$\bf{4}$};
	%%%%%% fiiliing of node in first lsd%%%%%%%%
	\fill[blue!100!](7.2,-3) circle (.08);
	\fill[blue!100!](11.3,-3) circle (.08);
	\draw[edge, blue] (7.2,-3)  to[bend left] (9.8,-3);
	\draw[edge, blue] (9.8,-3)  to[bend left] (8.7,-3);
	\draw[edge, blue] (8.7,-3)  to[bend left] (11.3,-3);
	\draw[edge, blue] (11.3,-3)  to[bend left] (7.2, -3);
	%%%%%%%%%%%% red lsd %%%%%%%%%%%%%%%%%%%%
	%\draw[edge, red] (b)  to[bend left] (d);
	%\draw[edge, red] (d)  to[bend left] (b);
	\fill[blue!100!](8.7,-3) circle (.08);
	\fill[blue!100!](9.8,-3) circle (.08);
	
%	\draw[edge] (d)  to[bend left] (e);
	%\draw[edge] (e)  to[bend left] (c);
	\end{tikzpicture}
	\caption{Complex linear subdigraphs and points of singularity.}
	\label{fig:singularity on stanly's thm}	
\end{figure}
For example, in Figure \ref{fig:singularity on stanly's thm}, the vertex $2$ is a point of singularity. Infact, in the first two linear subdigraphs the vertex $2$ is a enclosed vertex but in the other two linear subdigraphs vertex $2$ is a corner point (since $1\rightarrow 4\rightarrow 2\rightarrow 3\rightarrow 1$ and $1\rightarrow 3\rightarrow 2\rightarrow 4\rightarrow 1$ are IDI paths).  
\begin{defn}\label{defn: k colored di graph}
A \emph{$k$-colored digraph}, denoted by $\Gamma_{k, R}$ is a digraph equipped with a finite set $R=\{c_1, c_2, \cdots, c_k\}$ called the set of \emph{colors} such that for any ordered pair of vertices $(i, j)$ in $\Gamma_{k, R},$ there are $\ell (0\leq\ell\leq k)$ directed edges from $i$ to $j$ each receiving distinct colors from $R$ (i.e., no two of the directed edges from $i$ to $j$ receive the same color). 
\end{defn}
A $k$-colored digraph $\Gamma_{k, R}$ is said to be \emph{acyclic} if it has no cycle.  In this case, without loss of generality we assume that $i < j$ whenever there is an edge from $i$ to $j.$ i.e., in $\Gamma_{k, R}$ there is no edge from the vertex $j$ to the vertex $i,$ for any $i<j.$ By $v_{k_1}\underbrace{\rightarrow }_{c_{r_1}}v_{k_2}\underbrace{\rightarrow }_{c_{r_2}}v_{k_3}\underbrace{\rightarrow}_{c_{r_3}}v_{k_4}\underbrace{\rightarrow}_{c_{r_4}} \cdots\underbrace{\rightarrow}_{c_{r_{t-2}}} v_{k_{t-1}}\underbrace{\rightarrow}_{c_{r_{t-1}}}v_{k_{t}},$ we mean a color (not necessarily distinct) path from the vertex $v_{k_1}$ to the vertex $v_{k_t}$ such that the edge from $v_{k_i}$ to $v_{k_{i+1}}$ is colored by the color $c_{r_i},$ for $i=1, \cdots, t-1.$ Let the vertex set of the graph $\Gamma_{k, R}$ be $[n].$ For each $i\in [n],$ we assign $k$ variables $x_i^{(1)}, x_i^{(2)}, \cdots, x_i^{(k)},$ where for each $j\in [k], x_i^{(j)}$ is the variable corresponding to the color $c_j.$ Let $\{x_1^{(1)}, x_1^{(2)}, \cdots, x_1^{(k)},  x_2^{(1)}, x_2^{(2)}, \cdots, x_2^{(k)}, \cdots, x_n^{(1)}, x_n^{(2)}, \cdots, x_n^{(k)}\}$ be the set of variables.
Let us consider an $n\times n$  matrix $A_{\Gamma_{k, R}}=(a_{ij})$ as follows:
\begin{equation}\label{def:matrix stanely gen}
a_{ij}=
\begin{cases}
1+\sum\limits_{t\in [k]} x_i^{(t)},& \text{ if } i=j\\
\sum\limits_{t\in [k]\setminus \{i_1, \cdots, i_r\}}x_i^{(t)},& \text{ if } i<j \text{ and }  i \text{ to } j \text{ there are } r (0\leq r< k)\text{ different  } \\& \text{ colored edges using the colors }{c_{i_1}}, \cdots, {c_{i_r}}\\
0,& \text{ if } i<j \text{ and }  i \text{ to } j \text{ there are } k \text{ different colored edges }\\
\sum\limits_{t\in [k]}x_i^{(t)},& \text{ if } i>j.  \\
\end{cases}
\end{equation}
Note that, if $i<j$ and $r=0,$ then the set $\{i_1, \cdots, i_r\}=\emptyset.$ So, in this case $a_{ij}=\sum\limits_{t\in [k]}x_i^{(t)}.$
With the notation and terminology introduced above we can state our main theorem.
\begin{theorem}\label{thm: Main thm Stanly's thm Gen}
Let $\Gamma_{k, R}$ be a $k$-colored acyclic digraph such that the vertex set of $\Gamma_{k, R}$ is $[n].$ Let $A_{\Gamma_{k, R}}$ be an $n\times n$ matrix defined as \eqref{def:matrix stanely gen}. Then $\text{det}(A_{\Gamma_{k, R}})=$
\begin{align*}
1+\sum\limits_P \left(x_{i_1}^{(r_1)}x_{i_2}^{(r_2)}\cdots x_{i_{t-1}}^{(r_{t-1})} x_{i_t}^{(1)}+x_{i_1}^{(r_1)}x_{i_2}^{(r_2)}\cdots x_{i_{t-1}}^{(r_{t-1})} x_{i_t}^{(2)}+\cdots+x_{i_1}^{(r_1)}x_{i_2}^{(r_2)}\cdots x_{i_{t-1}}^{(r_{t-1})} x_{i_t}^{(k)}\right),\\
\text{ where } P \text{ ranges over all paths } i_1\underbrace{\rightarrow}_{c_{r_1}} i_2\underbrace{\rightarrow}_{c_{r_2}} i_3\underbrace{\rightarrow}_{c_{r_3}} \cdots\underbrace{\rightarrow}_{c_{r_{t-2}}} i_{t-1}\underbrace{\rightarrow}_{c_{r_{t-1}}} i_t \text{ in the graph } \Gamma_{k, R}.
\end{align*}
\end{theorem}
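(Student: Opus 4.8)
The plan is to expand $\det(A_{\Gamma_{k,R}})$ by the cycle-decomposition formula for the determinant recalled above, break every entry into monomials, and then organize the resulting signed monomials by their support. Writing $\det(A_{\Gamma_{k,R}})=\sum_{\gamma}(-1)^{n+c(\gamma)}w(\gamma)$ over the linear subdigraphs $\gamma$ of $D(A_{\Gamma_{k,R}})$, I would expand each weight from \eqref{def:matrix stanely gen} into its summands: the loop weight $1+\sum_{t}x_i^{(t)}$ lets each vertex be \emph{inactive} (choosing $1$) or \emph{active} (choosing one $x_i^{(c_i)}$), and on a genuine out-edge of $i$ the chosen summand records a colour $c_i$. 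Each resulting colored linear subdigraph contributes the single signed monomial $\prod_{i\in V}x_i^{(c_i)}$, where $V$ is its set of active vertices and $c_i$ is the colour of the out-edge of $i$. A direct computation shows that, once the inactive vertices are discarded, the global sign $(-1)^{n+c(\gamma)}$ equals $\operatorname{sgn}(\pi)$, the sign of the permutation $\pi$ induced by $\gamma$ on $V$, so that active loops are sign-neutral.

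Because distinct pairs $(V,c)$ produce distinct monomials, the theorem reduces to computing, for each fixed $V=\{k_1<\dots<k_j\}$ and colouring $c$, the coefficient
\[
\sum_{\pi}\operatorname{sgn}(\pi)\,[\pi\ \text{valid}]=\det(B),
\]
where $\pi$ ranges over permutations of $V$ and $\pi$ is \emph{valid} iff every up-edge $i\to i'$ it uses ($i<i'$) carries a colour $c_i$ that is \emph{not} the colour of a graph edge $i\to i'$ (down-edges and loops being unconstrained, by \eqref{def:matrix stanely gen}). Here $B=(B_{i,i'})_{i,i'\in V}$ is the $0/1$ matrix recording allowed out-edges, so that $\prod_i B_{i,\pi(i)}=[\pi\ \text{valid}]$. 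Granting that $\det(B)$ equals $1$ when $(V,c)$ encodes an increasing path with its edge-colours (i.e.\ $k_a\to k_{a+1}$ is a graph edge of colour $c_{k_a}$ for all $a<j$) and $0$ otherwise, the theorem follows: the constrained colours $c_{k_1},\dots,c_{k_{j-1}}$ give $x_{k_1}^{(r_1)}\cdots x_{k_{j-1}}^{(r_{j-1})}$, the free terminal colour $c_{k_j}$ produces $x_{k_j}^{(1)}+\cdots+x_{k_j}^{(k)}$, and $V=\emptyset$ gives the leading $1$. For $k=1$ this specializes to Theorem \ref{thm: Stanley enumeration of path in adigraph}.

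To evaluate $\det(B)$ I would order rows and columns by value, so that $B=J-G$ with $J$ the all-ones $j\times j$ matrix and $G$ the strictly upper-triangular $0/1$ matrix with $G_{ab}=1$ iff $k_a\to k_b$ is a graph edge of colour $c_{k_a}$. Subtracting each row of $B$ from the row above it annihilates every entry on or below the diagonal in rows $1,\dots,j-1$, so the only nonzero term in the permutation expansion is the long cycle $(1\,2\,\cdots\,j)$; this collapses the determinant to $\prod_{a=1}^{j-1}G_{a,a+1}$, which is $1$ exactly for the path colourings and $0$ otherwise. This is the clean route I would actually write down.

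The paper's apparatus (singular cycles, points of singularity, IDI paths, enclosed vertices) points instead to a bijective reading of this same cancellation: a sign-reversing, monomial-preserving involution $\Phi$ on the valid configurations supported on $V$, whose fixed points are precisely the all-loops path configurations. The switch is a merge/split that changes $c(\gamma)$ by one — splitting a cycle at an IDI corner into two cycles (one enclosing that corner), and conversely merging an enclosing cycle with an enclosed one at an enclosure point — together with an elementary base switch creating or destroying the short cycle $k_a\to k_{a+1}\to k_a$. The hard part is to make $\Phi$ simultaneously canonical, invertible, and \emph{validity-preserving}: a switch must send valid configurations to valid configurations, so it has to leave the multiset of coloured up-edges intact and only reshuffle the unconstrained down-edges and loops, and it is available to create the up-edge $k_a\to k_{a+1}$ of colour $c_{k_a}$ exactly when $\Gamma_{k,R}$ has no such coloured edge — which is what pins the fixed points to the path colourings and forces the elementary and enclosure/IDI switches to dovetail into one well-defined involution.
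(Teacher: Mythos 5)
Your argument is correct, and it is genuinely different from the paper's proof. The paper works on the other side of the identity: it encodes paths as ``best words'' in a free monoid, applies inclusion--exclusion over occurrences of bad pairs, matches each resulting signed term with one or several linear subdigraphs of $D(A_{\Gamma_{k,R}})$, and then disposes of the leftover ``complex'' linear subdigraphs by the merge/split involution built from enclosed vertices and IDI corners --- exactly the delicate construction you flag in your last paragraph (its canonicity and involutive character are the hardest points to verify in the paper). You instead extract the coefficient of each square-free monomial $\prod_{i\in V}x_i^{(c_i)}$ directly from the cycle expansion: since each entry $a_{i,j}$ involves only variables with lower index $i$, the coefficient is $\sum_{\pi\in S_V}\operatorname{sgn}(\pi)\prod_{i\in V}B_{i,\pi(i)}=\det(B)$ with $B=J-G$, $J$ the all-ones matrix and $G$ strictly upper triangular recording the colored edges of $\Gamma_{k,R}$ along $V$; the row reduction $R_a\mapsto R_a-R_{a+1}$ kills everything on and below the diagonal in rows $1,\dots,j-1$, forcing the long cycle and giving $\det(B)=\prod_{a=1}^{j-1}G_{a,a+1}$, which is the indicator that $(V,c)$ is a colored path with a free terminal color (the signs work out: $(-1)^{j-1}$ from the cycle cancels $(-1)^{j-1}$ from the negated entries). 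This is shorter, entirely elementary, and sidesteps the involution; what it gives up is the bijective/cancellation picture the paper is explicitly aiming for (the ``why'' via an explicit sign-reversing involution), and it leans on the special triangular structure $B=J-G$, whereas the paper's word/involution framework is the one that generalizes in the spirit of Goulden--Jackson. One small presentational point: you phrase the key identity $\det(B)=\prod_a G_{a,a+1}$ as something ``granted'' before proving it; in a write-up, state and prove that lemma first, and note explicitly that acyclicity forces every path of $\Gamma_{k,R}$ to visit its vertex set in increasing order, which is what makes ``consecutive in the sorted support'' coincide with ``consecutive on the path.''
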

\begin{rem}
Note that, if the color set $R$ contains exactly one color (that is if $R=\{c\},$ where $c$ represents a color), then  $\Gamma_{1, R}$ is acyclic digraph without multiple edges. Therefore as a corollary of this theorem, we get Theorem \ref{thm: Stanley enumeration of path in adigraph}.	
\end{rem}
In this portion, we explain our main theorem with one example.
\begin{exmp}
Here we first fix the color set $R=\{c_1, c_2, c_3\},$ where $c_1, c_2, c_3$ represent the colors black, blue, red respectively. 

We choose the variables $x_1^{(1)}, x_1^{(2)}, x_1^{(3)}, x_2^{(1)}, x_2^{(2)}, x_2^{(3)}, x_3^{(1)}, x_3^{(2)}, x_3^{(3)}, x_4^{(1)}, x_4^{(2)}, x_4^{(3)},$ where we use the variable $x_i^{(j)}$ for the edge coming out from the vertex $i$ and colored by $c_j$-th color, $j\in [3].$	
\begin{figure}[H]
\tiny
\tikzstyle{ver}=[]
\tikzstyle{vert}=[circle, draw, fill=black!100, inner sep=0pt, minimum width=4pt]
\tikzstyle{vertex}=[circle, draw, fill=black!.5, inner sep=0pt, minimum width=4pt]
\tikzstyle{edge} = [draw,thick,-]
%\tikzstyle{edge_style} = [draw=black, line width=2, ultra thick]
\tikzstyle{node_style} = [circle,draw=blue,fill=blue!20!,font=\sffamily\Large\bfseries]
\centering
\tikzset{->,>=stealth', auto,node distance=1cm,
	thick,main node/.style={circle,draw,font=\sffamily\Large\bfseries}}
\tikzset{->-/.style={decoration={
			markings,
			mark=at position #1 with {\arrow{>}}},postaction={decorate}}}
\begin{tikzpicture}[scale=1]
\tikzstyle{edge_style} = [draw=black, line width=2mm, ]
\tikzstyle{node_style} = [draw=blue,fill=blue!00!,font=\sffamily\Large\bfseries]
%EdgeStyle/.append style = {->, bend left} }
\tikzset{
	LabelStyle/.style = { rectangle, rounded corners, draw,
		minimum width = 2em, fill = yellow!50,
		text = red, font = \bfseries },
	VertexStyle/.append style = { inner sep=5pt,
		font = \Large\bfseries},
	EdgeStyle/.append style = {->, bend left} }
\tikzset{vertex/.style = {shape=circle,draw,minimum size=1.5em}}
\tikzset{edge/.style = {->,> = latex'}}		
%\draw[->, line width=.2 mm] (7.5,-3)--(7.5,-1);
		\draw[->, line width=.2 mm] (7.5,-1)--(6,.92);
		\draw[->, line width=.2 mm] (7.5,-1)--(9,.92);
		\draw[->, line width=.2 mm] (7.5,-3)--(7.5,-1.05);
		\node (a) at (7.1,-1){$\bf{v_2}$};
		\node (b) at (7.5, -3.2){$\bf{v_1}$};
		\node (c) at (6,1.2){$\bf{v_3}$};
		\node (d) at (9,1.2){$\bf{v_4}$};
		\fill[black!100!] (7.5,-1) circle (.08);
		\fill[black!100!] (7.5, -3) circle (.08);
		\fill[black!100!] (6,.92) circle (.08);
		\fill[black!100!] (9,.92) circle (.08);
		\draw[edge,blue] (7.5,-3)  to[bend right] (7.53,-1.05);
\end{tikzpicture}
\caption{A colored digraph $\Gamma_{3, R}$}
\label{matrix example stanley thm gen}
\end{figure}
Now from expression \eqref{def:matrix stanely gen}, the matrix corresponding to the graph $\Gamma$ in Figure \ref{matrix example stanley thm gen} is the following:
\[A_{\Gamma_{3, R}}=\left(
\begin{array}{cccc}
1+\sum\limits_{i=1}^3x_1^{(i)} & x_1^{(3)}& \sum\limits_{i=1}^3x_1^{(i)}& \sum\limits_{i=1}^3x_1^{(i)}\\ 
\sum\limits_{i=1}^3x_2^{(i)} & 1+\sum\limits_{i=1}^3x_2^{(i)}& \sum\limits_{i=2}^3x_2^{(i)} & \sum\limits_{i=2}^3x_2^{(i)} \\
\sum\limits_{i=1}^3x_3^{(i)} & \sum\limits_{i=1}^3x_3^{(i)} & 1+\sum\limits_{i=1}^3x_3^{(i)} &\sum\limits_{i=1}^3x_3^{(i)} \\
\sum\limits_{i=1}^3x_4^{(i)} & \sum\limits_{i=1}^3x_4^{(i)}& \sum\limits_{i=1}^3x_4^{(i)}& 1+\sum\limits_{i=1}^3x_4^{(i)}
\end{array}
\right).\]
$\text{ Then, }\text{det}(A_{\Gamma_{3, R}})$ is equal to
\begin{align*} 
1+&x_1^{(1)}+x_1^{(2)}+x_1^{(3)}+x_2^{(1)}+x_2^{(2)}+x_2^{(3)}+x_3^{(1)}+x_3^{(2)}+x_3^{(3)}
+x_4^{(1)}+x_4^{(2)}+x_4^{(3)}\\+&x_1^{(1)}x_2^{(1)}+x_1^{(1)}x_2^{(2)}+x_1^{(1)}x_2^{(3)}+x_1^{(2)}x_2^{(1)}+x_1^{(2)}x_2^{(2)}+x_1^{(2)}x_2^{(3)}
+x_2^{(1)}x_3^{(1)}+x_2^{(1)}x_3^{(2)}+x_2^{(1)}x_3^{(3)}\\+&x_2^{(1)}x_4^{(1)}+x_2^{(1)}x_4^{(2)}+x_2^{(1)}x_4^{(3)}
+x_1^{(1)}x_2^{(1)}x_3^{(1)}+x_1^{(1)}x_2^{(1)}x_3^{(2)}+x_1^{(1)}x_2^{(1)}x_3^{(3)}+x_1^{(2)}x_2^{(1)}x_3^{(1)}\\+&x_1^{(2)}x_2^{(1)}x_3^{(2)}+x_1^{(2)}x_2^{(1)}x_3^{(3)}
+x_1^{(1)}x_2^{(1)}x_4^{(1)}+x_1^{(1)}x_2^{(1)}x_4^{(2)}+x_1^{(1)}x_2^{(1)}x_4^{(3)}\\+&x_1^{(2)}x_2^{(1)}x_4^{(1)}+x_1^{(2)}x_2^{(1)}x_4^{(2)}+x_1^{(2)}x_2^{(1)}x_4^{(3)}.
\end{align*}
\end{exmp}
\section{Proof of main theorem}
We now prove our main theorem in a combinatorial way.  
\begin{proof}[Proof of Theorem \ref{thm: Main thm Stanly's thm Gen}]
Take \[W=\{x_1^{(1)}, x_1^{(2)}, \cdots, x_1^{(k)},  x_2^{(1)}, x_2^{(2)}, \cdots, x_2^{(k)}, \cdots, x_n^{(1)}, x_n^{(2)}, \cdots, x_n^{(k)}\}\] as the set of \emph{letters}. The \emph{free monoid} $W^*$ is the set of all finite sequences (including the empty sequence, denoted by $1$) of elements of $W,$ usually called \emph{words}, with the operation of concatenation. Construct an algebra  from $W^*$ by taking formal sum of elements of $W$ with coefficient in $\mathbb{Z},$ extending the multiplication by usual distributivity. For example, in this algebra,
\begin{align*}
(x_i^{(1)}+x_j^{(2)})(x_i^{(1)}+x_j^{(2)})=&  (x_i^{(1)}+x_j^{(2)})^2= x_i^{(1)}x_i^{(1)}+x_i^{(1)}x_j^{(2)}+x_j^{(2)}x_i^{(1)}+x_j^{(2)}x_j^{(2)},\\ &(1+x_i^{(1)})x_j^{(2)}=x_j^{(2)}+x_i^{(1)}x_j^{(2)},  \text{ etc}.
\end{align*}
A nonempty word  $x_{k_1}^{(i_1)}x_{k_2}^{(i_2)}\cdots x_{k_m}^{(i_m)}$ is said to be \emph{nice} if $k_1<k_2<\cdots<k_m.$ The empty word $1$ is also considered to be nice. Now let $B$ be a set of ordered pairs $(x_{k_r}^{(i_r)}, x_{k_s}^{(i_s)})$ with $k_r<k_s.$ In terms of digraph, $(x_{k_r}^{(i_r)}, x_{k_s}^{(i_s)})\in B$ if and only if there is no $i_r$-color edge (an edge colored by $i_r$th-color) from the vertex $k_r$ to the vertex $k_s.$ Therefore, $(x_{k_r}^{(i_r)}, x_{k_s}^{(i_s)})\in B$ implies that each of the following pair \[(x_{k_r}^{(i_r)}, x_{k_s}^{(1)}), (x_{k_r}^{(i_r)}, x_{k_s}^{(2)}), \cdots, (x_{k_r}^{(i_r)}, x_{k_s}^{(k)})\] is also a member of the set $B.$ We call a word $w\in W^*$ \emph{best} if $w$ is nice and no member of the set $B$ can occur in $w$ as a consecutive pair. Let $S$ be the sum of all best words in $W^*.$ Note that $1$ is also a best word. Now we derive an expression for $S$ by using the \emph{Principle of Inclusion Exclusion}(in short \emph{PIE}). The sum of all possible nice words is 
\begin{align}\label{single term w0}
W_0=\prod\limits_{g=1}^{n}\left(1+\sum\limits_{a=1}^kx_{g}^{(a)}\right).
\end{align}
The sum of all possible nice words, where there is an occurrence of at least one pair $(x_r^{(i)}, x_s^{(j)})\in B$ as a consecutive pair is
\begin{align}\label{ONE arbritrary MEMBER OF W_1, x_r^{i_r}x_s^{i_s}}
w^{(i, j)}_{rs}=\prod\limits_{g=1}^{r-1}\left(1+\sum\limits_{a=1}^kx_{g}^{(a)}\right)x_r^{(i)}x_s^{(j)}\prod\limits_{g=s+1}^{n}\left(1+\sum\limits_{a=1}^kx_{g}^{(a)}\right).
\end{align}
Let $W_1$ be the sum of all possible nice words, where there is an occurrence of at least one pair belongs to the set $B$ as a consecutive pair.
The sum of all possible nice words, where there is an occurrence of at least two pairs $(x_p^{(i)}, x_q^{(j)}), (x_r^{(\ell)}, x_s^{(m)})\in B$ (where $(p<q, r<s)$ and $i, j, \ell, m\in [k]$) as two consecutive pairs is 
\begin{align}\label{two arbritrary MEMBER OF W_2, x_r^{i_r}x_s^{i_s}, pq}
w^{(i, j, \ell, m)}_{pq, rs}=\prod\limits_{g=1}^{p-1}\left(1+\sum\limits_{a=1}^kx_{g}^{(a)}\right)x_p^{(i)} x_q^{(j)}\prod\limits_{g=q+1}^{r-1}\left(1+\sum\limits_{a=1}^kx_{g}^{(a)}\right)x_r^{(\ell)} x_s^{(m)}\prod\limits_{g=s+1}^{n}\left(1+\sum\limits_{a=1}^kx_{g}^{(a)}\right).
\end{align}
Let $W_2$ be the sum of all possible nice words, where there is an occurrence of at least two pairs belong to the set $B$ as consecutive pairs.
Define $W_3, W_4,\cdots$ etc., in a similar fashion. Then by the PIE, \[S=W_0-W_1+W_2-W_3+\cdots.\] Now, to any term of an arbitrary $W_i$ in the above expression, we associate a term or terms of $\text{det}(A_{\Gamma_{k, R}})$ so that the weights and signs match. Actually we want to show that the image of a word under the ring homomorphism \[f: W^*\rightarrow \mathbb{Z}\left[x_1^{(1)},  \cdots, x_1^{(k)}, \cdots, x_n^{(1)}, \cdots, x_n^{(k)}\right], f(x_i^{(j)})=x_i^{(j)} \text{ (for all } i, j) \]  is the sum of the weights of all linear subdigraphs associated to that word. This can be accomplished in the following way.
	
For the sake of simplicity, here we demonstrate, how to associate to each term of $W_0$ and $ W_1$ in the above expression, a term or terms of $\text{det}(A_{\Gamma_{k, R}}).$ The case of other indices $W_2, W_3, W_4, \cdots$ are similar. First of all $W_0$ consists the single term 
\begin{align}\label{w(W_0)}
\prod\limits_{g=1}^{n}\left(1+\sum\limits_{a=1}^kx_{g}^{(a)}\right).
\end{align}
To this term, we associate the linear subdigraph consisting of $n$ disjoint loops around each vertex. Note that \[a_{ii}=\left(1+\sum\limits_{a=1}^kx_{i}^{(a)}\right), \text{ for each } i\in [n].\] Consequently, the sign weight of this linear subdigraph is same as \eqref{w(W_0)}. Now let us assume  that in the digraph $\Gamma,$ there are exactly $m$-colored edges from the vertex $r$ to the vertex $s$ receiving $i_1, \cdots, i_m$ different colors. Hence from the vertex $r$ to the vertex $s,$ there is no $i$-colored edge, where $i\in [k]\setminus \{i_1 \cdots, i_m\}.$ As a result, in the matrix $A_{\Gamma_{k, R}}=(a_{ij})_{n\times n},$ $a_{rs}=\sum\limits_{\ell=m+1}^k x_r^{(i_{\ell})}.$
Consider the set 
\begin{align}\label{W_1(1,j)}
W_1(r,s)=\{\prod\limits_{i=1}^{r-1}\left(1+\sum\limits_{a=1}^kx_{i}^{(a)}\right)x_r^{(i_{\ell})}x_s^{(d)}\prod\limits_{i=s+1}^{n}\left(1+\sum\limits_{a=1}^kx_{i}^{(a)}\right): \ell\in [k]\setminus [m], d\in [k]\}.
\end{align}
Clearly, each member of the set $W_1(r,s)$ is a typical term in $W_1.$ The sign weighted sum of all elements in the set ${W_1(r,s)}$ (defined as \eqref{W_1(1,j)})is
\begin{equation}\label{w(W_(1, j))}
-\prod\limits_{i=1}^{r-1}\left(1+\sum\limits_{a=1}^kx_{i}^{(a)}\right)\sum\limits_{\ell=m+1}^k\sum\limits_{d=1}^k x_r^{(i_{\ell})}x_s^{(d)}\prod\limits_{i=s+1}^{n}\left(1+\sum\limits_{a=1}^kx_{i}^{(a)}\right).
\end{equation}
Now to \eqref{w(W_(1, j))}, we associate $2^{(c-1)}$ linear subdigraphs such that the sum of sign weights of $2^{(c-1)}$ linear subdigraphs is \eqref{w(W_(1, j))}, where $c=s-r.$ First consider the set \[T=\{r+1, r+2, \cdots, r+c-1\}\subset [n].\] Let $T'=\{t_1, \cdots, t_m\}$ be an arbitrary subset of $T$ such that $t_1<t_2<\cdots<t_m.$ Here we associate a linear subdigraph $L_{T'}$ as follows;

$L_{T'}$ consists of a cycle $r\rightarrow s\rightarrow t_m\rightarrow t_{m-1}\rightarrow \cdots\rightarrow t_2\rightarrow t_1\rightarrow r$ and $(n-m-2)$ disjoint loops around the remaining $(n-m-2)$ vertices. So, for each non empty  subset $T',$ we get a unique linear subdigraph $L_{T'}$ and 
\begin{align*}
&w(L_{T'})=(-1)^{m+1}\left(a_{rs}a_{st_m}a_{t_mt_{m-1}}\cdots a_{t_2t_1}a_{t_1r}\right)\prod\limits_{i\in [n]\setminus T} a_{ii}, \\
=(-1)^{m+1}&\left(\sum\limits_{\ell=m+1}^kx_r^{(i_{\ell})}\sum\limits_{a=1}^kx_s^{(a)}\sum\limits_{a=1}^kx_{t_m}^{(a)}\cdots\sum\limits_{a=1}^kx_{t_2}^{(a)}\sum\limits_{a=1}^kx_{t_1}^{(a)}\right)\prod\limits_{i\in [n]\setminus T}\left(1+\sum\limits_{a=1}^kx_{i}^{(a)}\right).
\end{align*}
If $T'=\emptyset,$ then $L_{T'}$ consists of a cycle $r\rightarrow s\rightarrow r$ and $(n-2)$ disjoint loops around the remaining $(n-2)$ vertices and
\begin{align*}
&w(L_{\emptyset})=-a_{rs}a_{sr}\prod\limits_{i\in [n]\setminus \{r, s\}}a_{ii}\\
=-&\sum\limits_{\ell=m+1}^kx_r^{(i_{\ell})}\sum\limits_{a=1}^kx_s^{(a)}\prod\limits_{i\in [n]\setminus \{r, s\}}\left(1+\sum\limits_{a=1}^kx_{i}^{(a)}\right)
\end{align*} 
Now the sum of the sign weights of all these $2^{(c-1)}$ linear subdigraphs is 
\begin{equation}\label{w(W_(r, s)),2^{c-1}}
-\prod\limits_{i=1}^{r-1}\left(1+\sum\limits_{a=1}^kx_{i}^{(a)}\right)\left(\sum\limits_{\ell=m+1}^k\sum\limits_{d=1}^k x_r^{(i_{\ell})}x_s^{(d)}\right)X\prod\limits_{i=s+1}^{n}\left(1+\sum\limits_{a=1}^kx_{i}^{(a)}\right),
\end{equation}
\text{ where }
\begin{align*}
&X= \prod\limits_{g=r+1}^{s-1}\left(1+\sum\limits_{a=1}^kx_{g}^{(a)}\right)-\sum\limits_{i\in T}\prod\limits_{g=r+1}^{i-1}\left(1+\sum\limits_{a=1}^kx_{g}^{(a)}\right)\left(\sum\limits_{a=1}^kx_{i}^{(a)}\right)\prod\limits_{g=i+1}^{s-1}\left(1+\sum\limits_{a=1}^kx_{g}^{(a)}\right)\\
&+\sum\limits_{i<j\in T}\prod\limits_{g=1}^{i-1}\left(1+\sum\limits_{a=1}^kx_{g}^{(a)}\right)\left(\sum\limits_{a=1}^kx_{i}^{(a)}\right)\prod\limits_{g=i+1}^{j-1}\left(1+\sum\limits_{a=1}^kx_{g}^{(a)}\right)\left(\sum\limits_{a=1}^kx_{j}^{(a)}\right)\prod\limits_{g=j+1}^{s-1}\left(1+\sum\limits_{a=1}^kx_{g}^{(a)}\right)\\&-\cdots+(-1)^{s-r-1}\prod\limits_{g= r+1}^{s-1}\left(\sum\limits_{a=1}^kx_{g}^{(a)}\right).
\end{align*}
Let $\mathcal{W}=\{x_{r+1}^{(1)},\cdots, x_{r+1}^{(k)}, x_{r+2}^{(1)},\cdots, x_{r+2}^{(k)},\cdots, x_{s-1}^{(1)},\cdots, x_{s-1}^{(k)}\}$ be the set of letters. Since the above expression is the PIE model for the sum of all nice words in $\mathcal{W}^*$ consisting of no letters, $X$ is precisely $1.$ 

For example, if $r=1$ and $s=2,$ then $W_1(1,2)$ contains the following terms
\begin{align*}
x_1^{(i_{m+1})}x_2^{(1)}&\prod\limits_{g=3}^{n}\left(1+\sum\limits_{a=1}^kx_{g}^{(a)}\right),\cdots, x_1^{(i_{m+1})}x_2^{(k)}\prod\limits_{g=3}^{n}\left(1+\sum\limits_{a=1}^kx_{g}^{(a)}\right), \cdots,\\& x_1^{(i_{k})}x_2^{(1)}\prod\limits_{g=3}^{n}\left(1+\sum\limits_{a=1}^kx_{g}^{(a)}\right),\cdots, x_1^{(i_{k})}x_2^{(k)}\prod\limits_{g=3}^{n}\left(1+\sum\limits_{a=1}^kx_{g}^{(a)}\right). 
\end{align*}
It is easy to see that, the sign of each term of $W_1(1,2)$ in $S$ is $-.$ Moreover, all the members of the set $W_1(1,2)$ are some typical terms of $W_1.$ Now the sign sum of all these terms (terms of $W_1(1,2)$) is \[-\sum\limits_{\ell=m+1}^k\sum\limits_{d=1}^k x_1^{(i_{\ell})}x_2^{(d)}\prod\limits_{g=3}^{n}\left(1+\sum\limits_{a=1}^kx_{g}^{(a)}\right).\]
In this case, we associate the linear subdigraph, consisting of the cycle $1\rightarrow 2 \rightarrow 1$ and $(n-2)$ disjoint loops around the remaining $(n-2)$ vertices. So, the number of cycle in this linear subdigraph is $n-1.$ Therefore, the weight of this linear subdigraph is  \[\sum\limits_{\ell=m+1}^k\sum\limits_{d=1}^k x_1^{(i_{\ell})}x_2^{(d)}\prod\limits_{g=3}^{n}\left(1+\sum\limits_{a=1}^kx_{g}^{(a)}\right)\] (as in the matrix
$a_{12}=\sum\limits_{\ell=m+1}^kx_1^{(i_{\ell})}, a_{21}=\sum\limits_{d=1}^kx_2^{(d)}, \text{ and for } i\geq 3, a_{ii}=1+\sum\limits_{d=1}^k x_i^{(d)})$ and its sign in $\text{det}(A_{\Gamma_{k, R}})$ is $-.$ 

Now if $r=1$ and $s=4,$ then 
\[W_{1}(1, 4)=\{x_1^{(i_{\ell})}x_4^{(d)}\prod\limits_{g=5}^{n}\left(1+\sum\limits_{a=1}^kx_{g}^{(a)}\right): \ell\in \{m+1, \cdots, k\}, d\in [k]\}.\] Here each term of the set $W_{1}(1, 4)$ is a typical term of $W_1.$ Also the sign sum of all the terms in the set $W_{1}(1, 4)$ is
\begin{equation}\label{equation: for 1 to j=4}
-\sum\limits_{\ell=m+1}^k\sum\limits_{d=1}^k x_1^{(i_{\ell})}x_4^{(d)}\prod\limits_{g=5}^{n}\left(1+\sum\limits_{a=1}^kx_{g}^{(a)}\right).
\end{equation}
Now, to \eqref{equation: for 1 to j=4} we associate four linear subdigraphs such that the sum of the signed weights of the four linear subdigraphs is same as \eqref{equation: for 1 to j=4}. 

The first one consists of a cycle $1\rightarrow 4 \rightarrow 1$ and $(n-2)$ disjoint loops around the remaining $(n-2)$ vertices. The second one consists of a cycle $1\rightarrow 4 \rightarrow 2\rightarrow 1$ and $(n-3)$ disjoint loops around the remaining $(n-3)$ vertices. The third one consists of a cycle $1\rightarrow 4 \rightarrow 3\rightarrow 1$ and $(n-3)$ disjoint loops around the remaining $(n-3)$ vertices. The last one consists of a cycle $1\rightarrow 4 \rightarrow 3\rightarrow 2\rightarrow 1$ and $(n-4)$ disjoint loops around the remaining $(n-4)$ vertices. Note that the signed weights of these four linear subdigraphs are
\begin{align*}\label{weight: for 1 to j=4, n-2 loops,1}
&-\sum\limits_{\ell=m+1}^k\sum\limits_{d'=1}^k x_1^{(i_{\ell})}x_4^{(d')}\prod\limits_{g\in[k]\setminus\{1, 4\}}\left(1+\sum\limits_{a=1}^kx_{g}^{(a)}\right),\\&
+\sum\limits_{\ell=m+1}^k\sum\limits_{d'=1}^k\sum\limits_{c'=1}^k x_1^{(i_{\ell})}x_4^{(d')}x_2^{(c')}\prod\limits_{g\in[k]\setminus\{1, 2, 4\}}\left(1+\sum\limits_{a=1}^kx_{g}^{(a)}\right),\\&
+\sum\limits_{\ell=m+1}^k\sum\limits_{d'=1}^k\sum\limits_{c=1}^k x_1^{(i_{\ell})}x_4^{(d')}x_3^{(c')}\prod\limits_{g\in[k]\setminus\{1, 3, 4\}}\left(1+\sum\limits_{a=1}^kx_{g}^{(a)}\right),\\& -\sum\limits_{\ell=m+1}^k\sum\limits_{d'=1}^k\sum\limits_{c'=1}^k\sum\limits_{b'=1}^k x_1^{(i_{\ell})}x_4^{(d')}x_3^{(c')}x_2^{(b')}\prod\limits_{g\in[k]\setminus\{1, 2, 3, 4\}}\left(1+\sum\limits_{a=1}^kx_{g}^{(a)}\right)
\end{align*}
respectively.
The contribution of these four linear subdigraphs to $\text{det}(A_{\Gamma_{k, R}})$ is the same as \eqref{equation: for 1 to j=4}, since
\begin{align*}
 \left(1+\sum\limits _{a=1}^kx_{2}^{(a)}\right)&\left(1+\sum\limits _{a=1}^kx_{3}^{(a)}\right)-\left(\sum\limits _{a=1}^kx_{2}^{(a)}\right)\left(1+\sum\limits _{a=1}^kx_{3}^{(a)}\right)-\\&\left(1+\sum\limits _{a=1}^kx_{2}^{(a)}\right)\left(\sum\limits _{a=1}^kx_{3}^{(a)}\right)+\left(\sum\limits _{a=1}^kx_{2}^{(a)}\right)\left(\sum\limits _{a=1}^kx_{3}^{(a)}\right)=1. 
\end{align*}
So, we have associated some terms of $\text{det}(A_{\Gamma_{k, R}})$ to the terms of $S,$ matching signs and weights. But
in $\text{det}(A_{\Gamma_{k, R}}),$ beside these linear subdigraphs, that have been used so far, there are other linear subdigraphs, which are complex linear subdigraphs according to Definition \ref{defn:complex lsd}. Therefore, to prove the theorem, we have to prove that the contribution of the complex linear subdigraphs to $\text{det}(A_{\Gamma_{k, R}})$ is zero. We prove this by defining a weight preserving and sign-reversing involution on $\mathcal{C},$ where $\mathcal{C}$ is the set of all complex linear subdigraphs of the digraph $D(A_{\Gamma_{k, R}}).$ Now let us define the sign-reversing involution on the set of all complex linear subdigraphs. 

Choose a complex linear subdigraph. Choose the singular cycle with the smallest
initial point (in the standard representation). Find its smallest point of singularity. Either it is an enclosed point or a corner point of an IDI path, as described earlier.
Now, we follow the following algorithm:

Let $\gamma\in \mathcal{C}.$ Let the vertex $v_s$ be the smallest point of singularity, and it is an enclosed point. Then $v_s$ is the initial point of one nontrivial cycle $C_{\text{in}}$ (say). 
 Also, there is one nontrivial cycle $C_{\text{out}}$ (say) 
 such that the cycle $C_{\text{out}}$ encloses the vertex $v_s.$ Clearly, $v_s\geq 2$ (as $v_s$ is an enclosed vertex). Let $u_1$ be the initial vertex of the cycle $C_{\text{out}}.$ Then there is a maximal decreasing subpath $P$ from the vertex $u_j$ (say) to $u_1$ in $C_{\text{out}}.$ So, there is a vertex $u_m$ in $C_{\text{out}}$ such that $u_m<u_j$ and $u_m\rightarrow u_j.$ Based on the fact that $v_s$ is enclosed by the cycle $C_{\text{out}}$ there exist consecutive vertices $u_{\ell}, u_r$ in $C_{\text{out}}$ such that $u_1\leq u_{\ell}<v_s<u_r$ and $u_r\rightarrow u_{\ell}$ in the cycle $C_{\text{out}}.$ Now, $v_s$ is the initial vertex of the cycle $C_{\text{in}},$ then there exist vertices $v_k \text{ and }v_t$ in $C_{\text{in}}$ such that $v_k>v_s<v_t$ and $v_t\rightarrow v_s, v_s\rightarrow v_k$ in $C_{\text{in}}.$ 
 Therefore, the cycles $C_{\text{in}}$ and $C_{\text{out}}$ are of the following form:
 \begin{align*}
C_{\text{in}}:&v_s\rightarrow v_k\rightarrow\cdots\rightarrow v_t\rightarrow v_s\\
C_{\text{out}}: &u_1\rightarrow\cdots\rightarrow u_i\rightarrow\cdots\rightarrow u_m\rightarrow u_j\rightarrow \cdots\rightarrow u_r\rightarrow u_{\ell}\rightarrow \cdots\rightarrow u_1.
\end{align*}
First, we delete the arcs $v_t\rightarrow v_s$ and $u_r\rightarrow u_{\ell}.$ Now from $u_1$ to $u_r$ follow the path along the enclosing cycle $C_{\text{out}},$ then move from $u_r$ to $v_s$ by adding the arc $u_r\rightarrow v_s$ (this is possible as $v_s<u_r$ implies that $a_{u_rv_s}\neq 0$ in $A(\Gamma_{k, R}))$ to reach the point $v_s$ and meet the cycle $C_{\text{in}}.$ Then follow the path from $v_s$ to $v_t$ along the cycle $C_{\text{in}}$ and then move from $v_t$ to $u_{\ell}$ by adding the arc $v_t\rightarrow u_{\ell}$ (since $u_{\ell}<v_t$ implies that $a_{v_tu_{\ell}}\neq 0$ in $A(\Gamma_{k, R})).$ After that from $u_{\ell}$ to $u_1,$ moving along the enclosing cycle $C_{\text{out}}.$ As a result, we get the following cycle:
\begin{align*}
C_{\text{combine}}: u_1\rightarrow\cdots \rightarrow u_i\rightarrow\cdots \rightarrow u_m\text{I}u_j\text{D}v_s\text{I}v_k\rightarrow \cdots \rightarrow v_m\rightarrow\cdots\rightarrow v_t\rightarrow u_{\ell}\rightarrow\cdots\rightarrow u_1.
\end{align*}
Clearly, $v_s$ is a corner point of the cycle $C_{\text{combine}}$ (In fact, $u_m\text{I}u_j\text{D}v_s\text{I} v_k$ is an IDI path). Moreover, according to the construction the cyle $C_{\text{combine}}$ is a singular cycle with the smallest initial point and $v_s$ is the smallest point of singularity. Consequently, we get a new linear subdigraph $\gamma'\in \mathcal{C}$ so that $w(\gamma')=w(\gamma)$ and $c(\gamma')=c(\gamma)-1.$
\begin{figure}[H]
	\tiny
	\tikzstyle{ver}=[]
	\tikzstyle{vert}=[circle, draw, fill=black!100, inner sep=0pt, minimum width=4pt]
	\tikzstyle{vertex}=[circle, draw, fill=black!.5, inner sep=0pt, minimum width=4pt]
	\tikzstyle{edge} = [draw,thick,-]
	%\tikzstyle{edge_style} = [draw=black, line width=2, ultra thick]
	\tikzstyle{node_style} = [circle,draw=blue,fill=blue!20!,font=\sffamily\Large\bfseries]
	\centering
	%\tikzset{->,>=stealth', auto,node distance=1cm,
	%	thick,main node/.style={circle,draw,font=\sffamily\Large\bfseries}}
	\newcommand{\updownarrows}{\mathbin\uparrow\hspace{-.5em}\downarrow}
	\newcommand{\downuparrows}{\mathbin\downarrow\hspace{-.5em}\uparrow}
	
	\tikzset{->-/.style={decoration={
				markings,
				mark=at position #1 with {\arrow{>}}},postaction={decorate}}}
	\begin{tikzpicture}[scale=1]
	\tikzstyle{edge_style} = [draw=black, line width=2mm, ]
	\tikzstyle{node_style} = [draw=blue,fill=blue!00!,font=\sffamily\Large\bfseries]
	%EdgeStyle/.append style = {->, bend left} }
	\tikzset{
		LabelStyle/.style = { rectangle, rounded corners, draw,
			minimum width = 2em, fill = yellow!50,
			text = red, font = \bfseries },
		VertexStyle/.append style = { inner sep=5pt,
			font = \Large\bfseries},
		EdgeStyle/.append style = {->, bend left} }
	\tikzset{vertex/.style = {shape=circle,draw,minimum size=1.5em}}
	\tikzset{edge/.style = {->,> = latex'}}
	%%%%%%%%%%%%%%%%%%%%%%%%%%%%%%%%%%%%%%%%%%%%%%%%%%%%%%%%%%%%%%%%%%%%
	\node[] (a) at  (-2.7,0) {$\bf{u_{1}}$};
	\node[] (a) at  (2.25,-.18) {$\bf{u_{i}}$};
 \node[] (a) at  (5.9,.2) {$\bf{u_{m}}$};
	\node[] (a) at  (7.6,0) {$\bf{u_{j}}$};
	\node[] (a) at  (.18,-.25) {$\bf{u_{\ell}}$};
	\node[] (d) at  (5.4,-.25) {$\bf{u_r}$};
	%%%%%% fiiliing of node in first lsd %%%%%%
	\fill[blue!100!](-2.5,0) circle (.04);
	\fill[blue!100!](2.25,0) circle (.04);
 \fill[blue!100!](5.9,0) circle (.04);
	\fill[blue!100!](7.25,0) circle (.04);
	\fill[blue!100!](.25,0) circle (.04);
	\fill[blue!100!](5.25,0) circle (.04);
	\draw[dashed, blue] (.25,0)  to[bend left] (-2.5,0);
	\draw[dashed, blue] (-2.5,0)  to[bend left] (2.25, 0);
	\draw[dashed, blue] (2.25,0)  to[bend left] (5.9, 0);
 \draw[edge, blue] (5.9,0)  to[bend left] (7.25, 0);
	\draw[dashed,->,>=stealth, blue] (7.25,0)  to[bend left] (5.25, 0);
     \draw[edge, blue] (5.25,0)  to[bend left] (.25,0);
	%%%%%%%%%%%% red lsd %%%%%%%%%%%%%%
	\node[] (b) at  (1.35,-.2) {$\bf{v_s}$};
	\node[] (c) at  (4.4,-.2) {$\bf{v_t}$};
	\node[] (c) at  (5.1,.2) {$\bf{v_m}$};
	\node[] (c) at  (3,-.2) {$\bf{v_k}$};
\fill[red!100!](1.5,0) circle (.04);
\fill[red!100!](4.3,0) circle (.04);
\fill[red!100!](4.9,0) circle (.04);
\fill[red!100!](3,0) circle (.04);
\draw[edge, red] (4.3,0)  to[bend left] (1.5,0);
\draw[edge, red] (1.5, 0)  to[bend left] (3,0);
\draw[dashed, red] (3, 0)  to[bend left] (4.9,0);
\draw[dashed, ,->,>=stealth, red] (4.9, 0)  to[bend left] (4.3,0);
%%%%%%%%%%%%%%%%%%%%
\draw[<->, very thick] (2,-1.3) --(2, -2.3);
%%%%%%%%%%%%%%%%%%%%%%%%%%%%%%%%%%%%%% 2nd  lsd %%%%%%%%%%%%%%%%%%%%%%%%%%%%%%%%
\node[] (a) at  (-2.7,-3.25) {$\bf{u_{1}}$};
\node[] (a) at  (2.25,-3.2) {$\bf{u_{i}}$};
\node[] (a) at  (5.9,-2.8) {$\bf{u_{m}}$};
\node[] (a) at  (7.6,-3.25) {$\bf{u_{j}}$};
\node[] (a) at  (.18,-3.25) {$\bf{u_{\ell}}$};
\node[] (d) at  (5.4,-3.25) {$\bf{u_r}$};
%%%%%% fiiliing of node in first lsd %%%%%%
\fill[blue!100!](.25,-3) circle (.04);
\fill[blue!100!](5.25,-3) circle (.04);
\fill[blue!100!](5.9,-3) circle (.04);
\fill[blue!100!](7.25,-3) circle (.04);
\draw[dashed,blue] (.25,-3)  to[bend left] (-2.5,-3);
%\draw[dashed, blue] (-2.5,-3)  arc (180:30:10pt) to (2.25, -3);
%\draw (-2.5,-3)[dashed, looseness=.7, bend left, show control points] (2.25, -3);
\draw[dashed,blue] (-2.5,-3)  to[bend left] (2.25, -3);
\draw[dashed,blue] (2.25,-3)  to[bend left] (5.9, -3);
\draw[edge,blue] (5.9,-3)  to[bend left] (7.25, -3);
\draw[dashed,->,>=stealth, blue] (7.25,-3)  to[bend left] (5.25, -3);
\draw[edge,black] (4.3,-3)  to[bend left] (.25,-3);
%%%%%%%%%%%% red lsd %%%%%%%%%%%%%%
\node[] (b) at  (1.35,-3.2) {$\bf{v_s}$};
\node[] (c) at  (4.4,-3.2) {$\bf{v_t}$};
\node[] (c) at  (5.05,-2.83) {$\bf{v_m}$};
\node[] (c) at  (3,-3.2) {$\bf{v_k}$};
\fill[red!100!](3,-3) circle (.04);
\fill[black!100!](-2.5,-3) circle (.04);
\fill[black!100!](2.25,-3) circle (.04);
%\fill[black!100!](6.25,-3) circle (.04);
\fill[red!100!](1.5,-3) circle (.04);
\fill[red!100!](4.3,-3) circle (.04);
\fill[red!100!](4.9,-3) circle (.04);
\draw[edge] (5.25,-3)  to[bend left] (1.5,-3);
\draw[edge,red] (1.5, -3)  to[bend left] (3,-3);
\draw[dashed,red] (3, -3)  to[bend left] (4.9,-3);
\draw[dashed,->,>=stealth, red] (4.9, -3)  to[bend left] (4.3,-3);	
\end{tikzpicture}
\caption{Involution on linear subdigraphs.}
\label{fig:involution on stanly's thm}	
\end{figure}
Conversely, let $v_s$ be a corner point in the cycle $C,$ where $C$ is the singular cycle with the smallest initial point $u_{1}$ (say). So, while traversing along the cycle starting from $u_1,$ we
eventually reach the point $v_s$ along a decreasing path $P_1$ connecting $u_r$ (say) and $v_s$ and then start moving along an increasing path $P_2$ connecting $v_s$ and $v_k$ (say). Now the path $P'$ from $v_k$ to $u_1$ has a maximal decreasing subpath say \[P'': v_m\rightarrow \cdots\rightarrow v_t\rightarrow u_{\ell}\rightarrow \cdots\rightarrow u_1.\] Clearly on the path $P'',$ there are two consecutive vertices namely $v_t$ and $u_{\ell}$ such that $u_{\ell}<v_s<v_t$ and $v_t\rightarrow u_{\ell}.$ First, we delete the arcs $v_t\rightarrow u_{\ell}$ and $u_r\rightarrow v_s.$ Then from $u_1$ to $u_r,$ follow the path along the cycle $C,$ then move from $u_r$ to $u_{\ell}$ by connecting the arc $u_r\rightarrow u_{\ell}$ (this is possible as $u_r>u_{\ell}$ implies that $a_{u_ru_{\ell}}\neq 0$ in $A(\Gamma_{k, R})).$ After that move from $u_{\ell}$ to $u_1$ by using the 
the path $u_{\ell}\rightarrow\cdots\rightarrow u_1$ along the cycle $C.$ Consequently, we complete a cycle \[C_1: u_1\rightarrow\cdots\rightarrow u_r\rightarrow u_{\ell}\rightarrow\cdots\rightarrow u_1.\] Now, from $v_s$ to $v_m$ moving along the increasing path $P_2$ and then from $v_m$ to $v_t$ traversing along the path $P''$. After that, moving from $v_t$ to $v_s$ by adding a directed edge $v_t\rightarrow v_s$ (this is possible as $v_s<v_t$ implies that $a_{v_tv_{s}}\neq 0$ in $A(\Gamma_{k, R})).$) Thus, we complete another cycle \[C_2: v_s\rightarrow \cdots\rightarrow v_m\rightarrow \cdots\rightarrow v_t\rightarrow v_s.\] As a result, we get a new linear subdigraph and according to the consturction, $v_s$ is the smallest point of singularity. Moreover, $v_s$ is an enclosed point. In this case, the number of cycles is increased by $1$ but weight is preserved. This completes the proof. 
\end{proof}

\subsection*{Data availability statement}
Availability of data and materials are not applicable.
\subsection*{Acknowledgment} I would like to thank Prof. Arvind Ayyer for his constant support and encouragement. The theorem is checked by ``Sage''. We thank the authors for generously releasing Sage as an open-source package. 
\bibliographystyle{amsplain}
\bibliography{gen-inv-lcp}
\end{document}